\newcommand\N{\mathds{N}}
\newtheorem{theorem}{Theorem}%[section]
\newtheorem{lemma}[theorem]{Lemma}
\newtheorem{problem}[theorem]{Problem}
\title{Short note on the convolution of binomial coefficients}
\author{Rui Duarte}
\address{Center for Research and Development in Mathematics and Applications,
Department of Mathematics,
University of Aveiro}
\email{rduarte@ua.pt}
\author{Ant\'onio Guedes de Oliveira} \address{CMUP and Mathematics
  Department, Faculty of Sciences, University of Porto}
\email{agoliv@fc.up.pt}
\DeclareMathOperator{\Bef}{Bef}
\DeclareMathOperator{\Aft}{Aft}
\date{\today}
\begin{document}

\begin{abstract}
  We know \cite{part} that, for every non-negative integer
  numbers $n,i,j$ and for every real number $\ell$,
\begin{align}
&\sum_{i+j=n}\binom{2\,i-\ell}{i}\binom{2\,j+\ell}{j}=\sum_{i+j=n}\binom{2\,i}{i}
\binom{2\,j}{j}\label{eq1},
\intertext{which is well-known to be $4^n$.  We extend this result
    by proving that, indeed,}
&\sum_{i+j=n}\binom{a\,i+k-\ell}{i}\binom{a\,j+\ell}{j}=
\sum_{i+j=n}\binom{a\,i+k}{i}\binom{a\,j}{j}\label{eq2}
\end{align}
for every integer $a$ and for every real $k$, and present new
expressions for this value.
\end{abstract}

\maketitle

\noindent
We consider the sequence $\big\{\binom{a\,n}{n}\big\}_{n=0}^\infty$, where $a$ is any integer number, negative, zero or positive, and take the convolution of this sequence with itself, defined by $P_a(n)=\sum_{i+j=n}\binom{a\,i}{i}\binom{a\,j}{j}$.

When $a=2$, the former is sequence A000984 of \cite{oeis}, the central binomial coefficients, and the latter is sequence $A000302$ of \cite{oeis}, the powers of $4$. In fact (cf. \cite{part}), this can be proved directly using \eqref{eq1}, and then the inclusion-exclusion principle. Note that
\begin{equation}
2\,P_2(n)=2^{2n+1}=\sum_{i=0}^{2n+1} \binom{2n+1}{i}=2\sum_{i=0}^n
\binom{2n+1}{i}.\label{eq3}
\end{equation}
For another identity, define as usual $[n]=\{1,\dotsc,n\}$ for any natural number $n$, and consider the collection of the subsets of $[2n]$ with more than $n$ elements with the same $(n+1)$-th element, say $p$. Note that $p=n+1+i$ for some $i=0,\dotsc,n-1$ and that there are $\binom{n+i}{n}\,2^{n-i-1}$ subsets in the collection. It follows that the number of all subsets of $[2n]$ is
\begin{equation}
P_2(n)=2^{2n}=2\sum_{i=0}^{n-1}2^{n-i-1}\binom{n+i}{i}+\binom{2n}{n}
=\sum_{i=0}^n 2^{n-i}\binom{n+i}{i}.\label{eq4}
\end{equation}

We generalize these identities, namely \eqref{eq1}, \eqref{eq3} and \eqref{eq4}. When $a=3$ and $a=4$, we have sequences $A006256$ and $A078995$ of \cite{oeis}, and no such simple formulas for $P_3(n)$ and $P_4(n)$ are known as in case $a=2$. For these sequences, we obtain, for every real $\ell$,
\begin{align*}
\sum_{i+j=n}\binom{3i}{i}\binom{3j}{j}
&=\sum_{i+j=n} 2^i\binom{3n+1}{j}=\sum_{i+j=n} 3^i\binom{2n+j}{j}
=\sum_{i+j=n}\binom{3i-\ell}{i}\binom{3j+\ell}{j}\\
\sum_{i+j=n}\binom{4i}{i}\binom{4j}{j}
&=\sum_{i+j=n} 3^i\binom{4n+1}{j}=\sum_{i+j=n} 4^i\binom{3n+j}{j}
=\sum_{i+j=n}\binom{4i-\ell}{i}\binom{4j+\ell}{j}
\end{align*}

More generally we obtain the following theorem.

\begin{theorem}\label{thm}
For every non-negative integer numbers $i$, $j$ and $n$, and for every
real numbers $k$ and $\ell$,
\begin{align}
\sum_{i+j=n}\binom{a\,i+k-\ell}{i}\binom{a\,j+\ell}{j}
&=\sum_{i+j=n}\binom{a\,i+k}{i}\binom{a\,j}{j} \nonumber \\
&=\sum_{i=0}^n (a-1)^{n-i}\,\binom{a\,n+k+1}{i} \label{eq6}\\
&=\sum_{i=0}^n a^{n-i}\,\binom{(a-1)n+k+i}{i} \label{eq7}
\end{align}
where we take $0^0=1$.
\end{theorem}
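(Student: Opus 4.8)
The plan is to introduce a fresh variable $z$ and prove all the identities at once by showing that the ordinary generating function in $z$ of the $n$-indexed sequence on each side is one and the same rational expression in the auxiliary series $w=w(z)$ defined by $w=1+z\,w^{a}$. (Here $w^{a}=\bigl(1+(w-1)\bigr)^{a}$ makes sense in $\R[[z]]$ for every integer $a$, positive, zero or negative, since $w-1$ has zero constant term; and since every binomial $\binom{a i+r}{i}$ is a polynomial in $r$, there is no obstruction to $k,\ell$ being real.)

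\textbf{Step 1 (a key lemma).} For any real $r$,
\[
G_{r}(z):=\sum_{n\ge 0}\binom{a\,n+r}{n}z^{n}=\frac{w^{\,r+1}}{a-(a-1)\,w}.
\]
To prove this, write $\binom{a n+r}{n}=[y^{n}]\,(1+y)^{a n+r}=[y^{n}]\bigl(\phi(y)^{n}H(y)\bigr)$ with $\phi(y)=(1+y)^{a}$ and $H(y)=(1+y)^{r}$, and use the Lagrange--Bürmann formula in diagonal form: $\sum_{n\ge 0}z^{n}[y^{n}]\bigl(\phi(y)^{n}H(y)\bigr)=H(y)/\bigl(1-z\,\phi'(y)\bigr)$, where $y=y(z)$ is determined by $y=z\,\phi(y)$. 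Here $y=z(1+y)^{a}$ gives $1+y=w$, and from $z=(w-1)/w^{a}$ and $\phi'(y)=a\,w^{a-1}$ one gets $1-z\,\phi'(y)=\bigl(a-(a-1)w\bigr)/w$ and $H(y)=w^{r}$, which yields the stated closed form.

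\textbf{Step 2 (the four generating functions).} The left-hand side of the first line of the theorem is $[z^{n}]\,G_{k-\ell}(z)\,G_{\ell}(z)$, and by Step 1
\[
G_{k-\ell}(z)\,G_{\ell}(z)=\frac{w^{\,k-\ell+1}}{a-(a-1)w}\cdot\frac{w^{\,\ell+1}}{a-(a-1)w}=\frac{w^{\,k+2}}{\bigl(a-(a-1)w\bigr)^{2}},
\]
which is independent of $\ell$; taking $\ell=0$ identifies it with the second sum. For \eqref{eq6}, observe that $\sum_{i=0}^{n}(a-1)^{n-i}\binom{a n+k+1}{i}=[y^{n}]\,(1+y)^{a n}\cdot\dfrac{(1+y)^{k+1}}{1-(a-1)y}$, again of the form $[y^{n}]\bigl(\phi(y)^{n}H(y)\bigr)$ with the same $\phi(y)=(1+y)^{a}$ and $H(y)=(1+y)^{k+1}/(1-(a-1)y)$; the diagonal Lagrange formula together with the simplification $1-(a-1)y=a-(a-1)w$ again gives $w^{\,k+2}/\bigl(a-(a-1)w\bigr)^{2}$. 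For \eqref{eq7}, use $\sum_{i\ge 0}\binom{M+i}{i}x^{i}=(1-x)^{-(M+1)}$ to write $\sum_{i=0}^{n}a^{\,n-i}\binom{(a-1)n+k+i}{i}=[x^{n}]\,(1-x)^{-(a-1)n}\cdot\dfrac{(1-x)^{-(k+1)}}{1-a x}$, apply the diagonal formula with $\phi(x)=(1-x)^{-(a-1)}$ — so now $x=x(z)$ solves $x(1-x)^{a-1}=z$, which is $x=(w-1)/w$ — and check that $1-z\,\phi'(x)=(1-ax)/(1-x)$ and $1-ax=\bigl(a-(a-1)w\bigr)/w$; this collapses once more to $w^{\,k+2}/\bigl(a-(a-1)w\bigr)^{2}$, finishing the proof.

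\textbf{Expected main obstacle.} The conceptual work is all in Step 1 and in choosing the right substitution for each sum; after that everything is routine algebra. The point that must not be gotten wrong is that \eqref{eq6} calls for $1+y=w$ while \eqref{eq7} calls for $x=(w-1)/w$ — with the correct change of variable in hand every denominator reduces to $a-(a-1)w$ and the four generating functions visibly coincide. A secondary, purely bookkeeping, subtlety is justifying the formal manipulations uniformly in the integer $a$ (including $a\le 0$) and in the real parameters $k,\ell$, which is handled by the remarks in the opening paragraph.
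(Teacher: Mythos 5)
Your proof is correct, and it takes a genuinely different route from the paper. The paper stays entirely at the level of finite binomial sums: it rewrites the left-hand side by upper negation, applies Vandermonde's convolution, and reduces everything to two lemmas — the identity $S_{a,\ell}(n)=(a-1)^n$ (proved by an induction on $a$ via a binomial transform plus inclusion–exclusion at the base case $a=1$), which yields \eqref{eq6}, and a bijective identity $\binom{s+t+1}{j}=\sum_i\binom{s-i}{s-j}\binom{t+i}{i}$, which yields \eqref{eq7}. You instead package all four expressions as coefficients of a single ordinary generating function and show each one equals $w^{k+2}/\bigl(a-(a-1)w\bigr)^2$ with $w=1+zw^a$, using the diagonal form of Lagrange--Bürmann three times; I have checked the three changes of variable ($1+y=w$ for the convolution and for \eqref{eq6}, $x=(w-1)/w$ for \eqref{eq7}) and the resulting simplifications $1-(a-1)y=a-(a-1)w$ and $1-ax=\bigl(a-(a-1)w\bigr)/w$, and they are all right; the formal-power-series justifications for integer $a$ (including $a\le0$, since $w(0)=1$ makes $w^a$ and $w^{r+1}$ well defined) and real $k,\ell$ are adequately addressed. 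What your approach buys is a uniform explanation — the $\ell$-independence of the first sum becomes visible at a glance from $w^{k-\ell+1}\cdot w^{\ell+1}=w^{k+2}$ — plus a closed form for the generating function of the common value as a bonus; what it costs is reliance on Lagrange inversion, whereas the paper's argument is elementary and partly combinatorial (and its Lemma~\ref{lemma3} bijection is of independent interest). Both are complete proofs.
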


For the proof of this theorem we need some technical results.
\begin{lemma}\label{lemma1}
  Let, for any real $\ell$ and integers $a$ and $n$ such that $n\geq0$,
\begin{align*}
  &S_{a,\ell}(n)\;=\;\sum_{i=0}^n (-1)^i \binom{\ell-(a-1)i}{i}
  \binom{\ell-a\,i}{n-i}\\[-15pt]
  \intertext{Then}\\[-20pt]
  &\sum_{i=0}^n\binom{n}{p}\,S_{a,\ell}(p)\;=\;S_{a+1,\ell+n}(n).
\end{align*}
\end{lemma}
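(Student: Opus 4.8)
The plan is to establish the identity by substituting the definition of $S_{a,\ell}$, interchanging the order of summation on the left-hand side (which I read as $\sum_{p=0}^{n}\binom{n}{p}\,S_{a,\ell}(p)$), evaluating the resulting inner sum by Vandermonde's convolution, and finally matching the outcome with $S_{a+1,\ell+n}(n)$ term by term.

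First I would swap the two sums, obtaining
\[
\sum_{p=0}^{n}\binom{n}{p}\,S_{a,\ell}(p)
=\sum_{i=0}^{n}(-1)^i\binom{\ell-(a-1)i}{i}\sum_{p=i}^{n}\binom{n}{p}\binom{\ell-ai}{p-i}.
\]
For the inner sum I would set $q=p-i$ and write $\binom{n}{q+i}=\binom{n}{n-i-q}$, which is legitimate since $n$ is a non-negative integer; Vandermonde's convolution then gives $\sum_{q}\binom{n}{n-i-q}\binom{\ell-ai}{q}=\binom{n+\ell-ai}{n-i}$, an identity valid for real $\ell$ because it holds as a polynomial identity in the upper arguments. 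Thus the left-hand side equals $\sum_{i=0}^{n}(-1)^i\binom{\ell-(a-1)i}{i}\binom{n+\ell-ai}{n-i}$.

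It then remains to recognize this as $S_{a+1,\ell+n}(n)=\sum_{i=0}^{n}(-1)^i\binom{\ell+n-ai}{i}\binom{\ell+n-(a+1)i}{n-i}$. Putting $x=\ell-ai$, the $i$-th summand of the former is $(-1)^i\binom{x+i}{i}\binom{x+n}{n-i}$ and that of the latter is $(-1)^i\binom{x+n}{i}\binom{x+n-i}{n-i}$, so it suffices to verify the elementary identity
\[
\binom{x+i}{i}\binom{x+n}{n-i}=\binom{x+n}{i}\binom{x+n-i}{n-i},
\]
both sides being polynomials in $x$ of degree $n$ that agree at every non-negative integer (where each side equals $\binom{x+n}{n}\binom{n}{i}$), hence everywhere. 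Matching term by term then completes the argument.

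The computation is essentially routine. The only points that need care are the bookkeeping of the index shift in the inner sum and the passage from integer to real parameters in Vandermonde's convolution and in the final product identity, both of which are handled by the polynomial-identity principle; I do not anticipate a genuine obstacle.
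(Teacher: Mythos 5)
Your proof is correct and follows essentially the same route as the paper's: interchange the order of summation, collapse the inner sum by Vandermonde's convolution, and regroup the resulting product of binomial coefficients into the form required for $S_{a+1,\ell+n}(n)$ (the paper does this last regrouping by passing through a trinomial coefficient, which is the same identity you verify by polynomial interpolation). If anything, your version is slightly more careful, since you apply the symmetry to the genuine non-negative integer $n$ in $\binom{n}{p}=\binom{n}{n-p}$ rather than to the possibly non-integral upper index $\ell-a\,i$, as the paper's second displayed line does.
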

\begin{proof}
\begin{align*}
  \sum_{i=0}^n\binom{n}{p}\,S_{a,\ell}(p) &=\sum_{i=0}^n\left[(-1)^i
    \binom{\ell-(a-1)i}{i}
    \sum_{p=i}^n \binom{\ell-a\,i}{p-i}\binom{n}{p}\right]\\
  &=\sum_{i=0}^n \left[(-1)^i \binom{\ell-(a-1)i}{i} \sum_{p=i}^n
    \binom{\ell-a\,i}{\ell-(a-1)i-p}\binom{n}{p}\right]\\
  &=\sum_{i=0}^n (-1)^i \binom{\ell-(a-1)i}{i}
  \binom{\ell+n-a\,i}{\ell-(a-1)\,i}\\
  &=\sum_{i=0}^n (-1)^i \binom{(\ell+n)-a\,i}{i \ , \ n-i \ , \ \ell-ai} \\
  &=\sum_{i=0}^n (-1)^i \binom{(\ell+n)-a\,i}{i}
  \binom{(\ell+n)-(a+1)\,i}{n-\,i}
\end{align*}
where we use Vandermonde's convolution in the third equality.
\end{proof}

\begin{lemma}\label{lemma2}
With the notation of the previous lemma,
$$S_{a,\ell}(n)\;=\;(a-1)^n.$$
\end{lemma}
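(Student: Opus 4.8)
The plan is to prove the identity by induction on $a$, with Lemma~\ref{lemma1} as the engine: that lemma says precisely that the binomial transform sends the sequence $n\mapsto S_{a,\ell}(n)$ to the sequence $n\mapsto S_{a+1,\ell+n}(n)$, while the binomial transform of $n\mapsto(a-1)^n$ is $n\mapsto a^n$. Since $a$ ranges over all of $\Z$, I would anchor the induction at $a=1$ and then run it both upwards (towards $a=2,3,\dots$) and downwards (towards $a=0,-1,\dots$). It is convenient to regard the assertion $S_{a,\ell}(n)=(a-1)^n$ as a statement about \emph{all} real $\ell$ at once, so that the shift $\ell\mapsto\ell+n$ built into Lemma~\ref{lemma1} is harmless.

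For the base case $a=1$ we have $(a-1)i=0$, so $S_{1,\ell}(n)=\sum_{i=0}^n(-1)^i\binom{\ell}{i}\binom{\ell-i}{n-i}$. The trinomial-revision identity $\binom{\ell}{i}\binom{\ell-i}{n-i}=\binom{\ell}{n}\binom{n}{i}$ (an equality of polynomials in $\ell$) lets us pull out $\binom{\ell}{n}$, leaving $\binom{\ell}{n}\sum_{i=0}^n(-1)^i\binom{n}{i}=\binom{\ell}{n}\,0^n$. This equals $1=0^0$ for $n=0$ and $0$ for $n\geq1$, i.e.\ $S_{1,\ell}(n)=(1-1)^n$ in all cases.

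For the upward step, assume $S_{a,\ell}(p)=(a-1)^p$ for every real $\ell$ and every $p\geq0$. Given any real $m$ and any $n\geq0$, apply Lemma~\ref{lemma1} with $\ell=m-n$: the right-hand side is $S_{a+1,m}(n)$, and the left-hand side equals $\sum_{p=0}^n\binom{n}{p}(a-1)^p=a^n$ by the binomial theorem, so $S_{a+1,m}(n)=\big((a+1)-1\big)^n$. For the downward step, assume $S_{a+1,m}(n)=a^n$ for every real $m$ and every $n\geq0$. Fixing $\ell$, Lemma~\ref{lemma1} becomes $\sum_{p=0}^n\binom{n}{p}S_{a,\ell}(p)=S_{a+1,\ell+n}(n)=a^n$ for all $n\geq0$; binomial inversion then gives $S_{a,\ell}(n)=\sum_{p=0}^n(-1)^{n-p}\binom{n}{p}a^p=(a-1)^n$.

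I do not expect a genuinely hard step: once Lemma~\ref{lemma1} is in hand the argument is short. The points deserving attention are (i) the negative values of $a$, which make the downward induction, and hence binomial inversion rather than just the binomial theorem, necessary; (ii) keeping $\ell$ universally quantified so that the shift in Lemma~\ref{lemma1} can be absorbed; and (iii) the conventions $0^0=1$ in the base case and in $(a-1)^n$ when $a=1$.
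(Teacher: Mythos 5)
Your proof is correct, and its skeleton is the same as the paper's: anchor at $a=1$ and propagate along $a$ using Lemma~\ref{lemma1} together with the fact that the binomial transform turns $(a-1)^p$ into $a^n$. The differences are in the two places where the skeleton needs flesh, and in both you do something slightly different from (and arguably tighter than) the paper. For the base case the paper first reduces to $\ell\in\N$ by a polynomiality argument and then shows $S_{1,\ell}(n)=0$ for $n\geq1$ by inclusion--exclusion over the family of $n$-subsets of $[\ell]$ avoiding a given set $T$; your trinomial revision $\binom{\ell}{i}\binom{\ell-i}{n-i}=\binom{\ell}{n}\binom{n}{i}$ gives the same conclusion directly as a polynomial identity in $\ell$, with no need to specialize to natural $\ell$ and no combinatorics. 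More substantively, the paper's induction as written only runs upward ($a\to a+1$), so it literally reaches only $a\geq1$, even though the theorem is claimed for every integer $a$; your downward step, using binomial inversion to recover $S_{a,\ell}(n)$ from $S_{a+1,\cdot}(\cdot)$, is exactly what is needed to cover $a\leq0$ and is a genuine improvement on the argument in the paper. Your attention to keeping $\ell$ universally quantified (so the shift $\ell\mapsto\ell+n$ in Lemma~\ref{lemma1} is absorbed) also makes explicit a point the paper glosses over.
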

\begin{proof}
  First note that we may assume that $\ell$ is a natural number,
    since $S_{a,\ell}(n)$ is a polynomial in $\ell$, and thus is
    constant. Now, suppose that $S_{a,\ell}(p)=x^p$ for some numbers
  $a$, $\ell$, $p$ and $x$. Then, from Lemma~\ref{lemma1} it follows
  that $S_{a+1,\ell+n}(n)=(1+x)^n$. Hence, all we must prove is that
  $S_{a,\ell}(n)=0$ when $a=1$ and $\ell\in\N$.

  For this purpose, define $\mathcal{A}=\mathcal{A}_\varnothing$ as
  the set of $n$-subsets of the set $[\ell]=\{1,2,\dotsc,\ell\}$ and,
  for every non-empty subset $T$ of $[\ell]$,
  $\mathcal{A}_T=\big\{A\in\mathcal{A}\mid A\cap T=\varnothing\big\}$.
  Now, the result follows immediately from the inclusion-exclusion
  principle applied to this family.
\end{proof}

\begin{lemma}\label{lemma3}
Let $s$ and $t$ be positive integers. Then
$$\binom{s+t+1}{j} = \sum_{i=0}^j \binom{s-i}{s-j} \binom{t+i}{i}.$$
\end{lemma}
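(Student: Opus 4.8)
The plan is to prove the identity by generating functions, after first putting it in a more symmetric shape. Since $\binom{s-i}{s-j}=\binom{s-i}{j-i}$ for $0\le i\le s$, and since the range in which the lemma will actually be used has $0\le j\le s$, the claim is equivalent to $\sum_{i=0}^{j}\binom{t+i}{i}\binom{s-i}{j-i}=\binom{s+t+1}{j}$. I would then treat $j$ as a formal variable and compute the ordinary generating function $\sum_{j\ge0}\bigl(\sum_{i=0}^{j}\binom{t+i}{i}\binom{s-i}{j-i}\bigr)x^{j}$, working in the ring $\R[[x]]$.

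The computation itself is short. Writing $\binom{s-i}{j-i}=[x^{j}]\,x^{i}(1+x)^{s-i}$ and interchanging the (locally finite) summations, the generating function becomes $(1+x)^{s}\sum_{i\ge0}\binom{t+i}{i}\bigl(x/(1+x)\bigr)^{i}$. Now apply the classical expansion $\sum_{i\ge0}\binom{t+i}{i}z^{i}=(1-z)^{-(t+1)}$ with $z=x/(1+x)$: because $1-z=1/(1+x)$, the inner sum collapses to $(1+x)^{t+1}$, so the whole generating function equals $(1+x)^{s}\cdot(1+x)^{t+1}=(1+x)^{s+t+1}$. Extracting the coefficient of $x^{j}$ yields $\binom{s+t+1}{j}$, which is the assertion. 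As a check one can also argue bijectively: $\binom{s+t+1}{j}$ counts monotone lattice paths from $(0,0)$ to $(j,\,s+t+1-j)$, each of which ascends from height $t$ to height $t+1$ at a unique abscissa $i$, splitting the path into $\binom{t+i}{i}$ choices before the crossing and $\binom{s-i}{j-i}$ choices after it; summing over $i=0,\dots,j$ reproduces the identity whenever $0\le j\le s$.

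I do not expect a serious obstacle here: the one genuine idea is the substitution $z=x/(1+x)$, which turns the negative‑binomial series back into a power of $1+x$, and it is legitimate in $\R[[x]]$ since $x/(1+x)$ has positive order. The only points that need a moment's care are the bookkeeping with binomial coefficients having negative entries — in particular checking that extending the inner sum to all $i\ge0$ introduces no spurious terms (it does not, since $\binom{s-i}{j-i}=0$ as soon as $i>j$) — and recording precisely for which values of $j$ the form with $\binom{s-i}{s-j}$ written in the statement is literally an equality.
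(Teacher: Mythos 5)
Your proposal is correct, but your main line of argument differs from the paper's. The paper proves the lemma purely bijectively: for a $j$-subset $A$ of $[s+t+1]$ it takes $p(A)$ to be the $(s-j+1)$-th smallest element of the complement and maps $A$ to the pair consisting of its part below $p(A)$ (a $(j-i)$-subset of $[s-i]$) and its shifted part above $p(A)$ (an $i$-subset of $[t+i]$); this is exactly your lattice-path ``check'' translated into subset language, since the crossing of the path from height $t$ to height $t+1$ occurs precisely at that distinguished gap. Your primary generating-function computation --- writing the convolution as $(1+x)^{s}\sum_{i\ge0}\binom{t+i}{i}\bigl(x/(1+x)\bigr)^{i}=(1+x)^{s+t+1}$ via the negative-binomial series --- is a clean, fully rigorous alternative in $\R[[x]]$; it buys brevity and routine verifiability, while the paper's bijection buys an explicit combinatorial explanation (and fits the article's stated interest in combinatorial proofs, cf.\ the closing Problem). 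One point in your favour: you are right to flag the range restriction. As literally written, $\binom{s-i}{s-j}$ vanishes for $j>s$ while $\binom{s+t+1}{j}$ need not, so the identity in the stated form requires $0\le j\le s$; the paper's proof also tacitly assumes this (otherwise ``the $(s-j+1)$-th smallest element not in $A$'' is undefined), so your explicit caveat is a genuine improvement in precision rather than a defect.
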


\begin{proof}
  Given a subset $S$ of $[n]$ with $k$ elements and $p \in [n]
  \setminus S$, let $\Bef_p (S) = S \cap [p-1]$ and $\Aft_p (S) = \{ t
  \in [n-p] \mid t+p \in S \}$.

  Now, let $A$ be a subset of $[s+t+1]$ with $j$ elements and
  $p(A)$ be the $s-j+1$ smallest element of $[s+t+1]$ which is not in
  $A$. In other words, $\# \{ x \in A \mid x < p(A) \} = j-i$ and $\#
  \{ x \in A \mid x > p(A) \} = i$. One can easily see that the
  mapping
$$
\begin{array}{cccc}
  \varphi: & \mathcal{P}_j ([s+t+1]) & \to & \bigcup \limits_{0 \leq i \leq j} \mathcal{P}_{j-i} ([s-i]) \times \mathcal{P}_i ([t+i]) \\
  & A & \mapsto & (\Bef_{p(A)} (A), \Aft_{p(A)} (A))\end{array}$$
is a bijection, with inverse given by $\psi(B,C)=B \cup \{c+\#C\mid
c\in C \}$, and the union is disjoint.
\end{proof}

\begin{proof}[Proof of Theorem~\ref{thm}]
Let $\mathfrak{S}=\sum_{i+j=n} \binom{a\,i+k-\ell}{i} \binom{a\,j+\ell}{j}
= \sum_{i+j=n} (-1)^i \binom{\ell-k'-(a-1)i}{i} \binom{a\,n+\ell-a\,i}{j}$,
with $k'=k+1$. Then, by Vandermonde's convolution,
\begin{eqnarray*}
  \mathfrak{S}
  & = & \sum_{i+j=n} \left[ (-1)^i \binom{\ell-k'-(a-1)i}{i} \sum_{p+m=j}
    \binom{a\,n+k'}{p} \binom{\ell-k'-a\,i}{m} \right] \\
  & = & \sum_{p=0}^n \left[ \binom{a\,n+k'}{p} \sum_{i+m=n-p} (-1)^i
    \binom{\ell-k'-(a-1)i}{i} \binom{\ell-k'-a\,i}{m} \right]
\end{eqnarray*}
Now, \eqref{eq6} follows immediately from Lemma~\ref{lemma2} and
\eqref{eq7} from Lemma~\ref{lemma3}.
\end{proof}

We end this article with a new result that, when we represent
by $\left(\!\!\binom{n}{k}\!\!\right)$ the number $\binom{n+k-1}{k}$
of $k$-multisets of elements of an $n$-set, can be formulated in the
following elegant terms.

\begin{theorem}\label{thm2}
For every real $\ell$ and integers $a,n,i,j$ such that $n,i,j\geq 0$,
$$\sum_{i+j=n} (-1)^i \left(\!\!\!\binom{\ell-a\,i}{i}\!\!\!\right)
\binom{\ell-a\,i}{j}= a(a-1)^{n-1}.$$
\end{theorem}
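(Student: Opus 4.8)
The plan is to express the left-hand side in terms of two values of the quantity $S$ introduced in Lemma~\ref{lemma2}. The first thing I would do is rewrite the multiset coefficient as an ordinary binomial coefficient: from $\left(\!\!\binom{m}{k}\!\!\right)=\binom{m+k-1}{k}$ we get $\left(\!\!\binom{\ell-a\,i}{i}\!\!\right)=\binom{\ell-(a-1)i-1}{i}$, so the sum becomes $\sum_{i=0}^{n}(-1)^i\binom{\ell-(a-1)i-1}{i}\binom{\ell-a\,i}{n-i}$. This is almost $S_{a,\ell}(n)=\sum_{i=0}^{n}(-1)^i\binom{\ell-(a-1)i}{i}\binom{\ell-a\,i}{n-i}$; the sole difference is that the upper entry of the first factor has dropped by one.

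The decisive step is to correct that by Pascal's rule, $\binom{\ell-(a-1)i-1}{i}=\binom{\ell-(a-1)i}{i}-\binom{\ell-(a-1)i-1}{i-1}$, and split the sum into two. The first half is exactly $S_{a,\ell}(n)$, equal to $(a-1)^n$ by Lemma~\ref{lemma2}. In the second half the $i=0$ term disappears ($\binom{\cdot}{-1}=0$), so I would substitute $i=m+1$, $0\le m\le n-1$; then $\ell-(a-1)(m+1)-1=(\ell-a)-(a-1)m$ and $\ell-a(m+1)=(\ell-a)-a\,m$, while the factor $(-1)^{m+1}$ absorbs the minus sign coming from Pascal's rule, so the second half equals $S_{a,\ell-a}(n-1)=(a-1)^{n-1}$, again by Lemma~\ref{lemma2}. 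Adding the two halves gives $(a-1)^n+(a-1)^{n-1}=a(a-1)^{n-1}$, which is the asserted identity (understood for $n\ge1$; for $n=0$ the left-hand side is just $1$).

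I would not expect any real obstacle, since the whole argument is bookkeeping built on the already-proven Lemma~\ref{lemma2}; no new inclusion-exclusion argument is needed. The one place that deserves care is the reindexed second sum: one must confirm that the shift by one is absorbed \emph{uniformly} by passing from the pair $(\ell,n)$ to $(\ell-a,n-1)$ --- simultaneously in the ``$(a-1)i$'' slot and in the ``$a\,i$'' slot --- and that the two sign flips (the minus from Pascal and the parity change $(-1)^i\mapsto(-1)^{m+1}$) cancel exactly, leaving a clean copy of $S_{a,\ell-a}(n-1)$.
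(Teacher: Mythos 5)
Your proposal is correct and follows exactly the paper's own argument: rewrite the multiset coefficient as $\binom{\ell-(a-1)i-1}{i}$, split by Pascal's rule into $S_{a,\ell}(n)+S_{a,\ell-a}(n-1)$, and apply Lemma~\ref{lemma2} to get $(a-1)^n+(a-1)^{n-1}=a(a-1)^{n-1}$. Your careful verification of the reindexing and the remark about the $n=0$ case are welcome details the paper leaves implicit.
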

\begin{proof}
By Pascal's rule,
\begin{align*}
\sum_{i+j=n} (-1)^i \binom{\ell-1-(a-1)i}{i} \binom{\ell-a\,i}{j}
= & \sum_{i=0}^n (-1)^i \binom{\ell-(a-1)i}{i} \binom{\ell-a\,i}{n-i}\\
& - \sum_{i=1}^n (-1)^i \binom{\ell-(a-1)i-1}{i-1} \binom{\ell-a\,i}{n-i}\\
= & S_{a, \ell}(n)+S_{a, \ell-a}(n-1)
\end{align*}
\end{proof}

\begin{problem}
Give a full combinatorial proof of Theorem~\ref{thm2}.
\end{problem}

\noindent\textbf{Acknowledgments.}\enspace
The work of both authors was supported in part by the European
Regional Development Fund through the program COMPETE --Operational
Programme Factors of Competitiveness (``Programa Operacional Factores
de Competitividade'') - and by the Portuguese Government through FCT
-- Funda\c{c}\~ao para a Ci\^encia e a Tecnologia, under the projects
PEst-C/MAT/UI0144/2011 and PEst-C/MAT/UI4106/2011.

\end{document}